 \newtheoremstyle{mytheorem}
 {3pt}
 {3pt}
 {\slshape}
 {}
 {\bfseries}
 {.}
 { }
 {}
\numberwithin{equation}{section}
\theoremstyle{theorem}
\newtheorem{theorem}{Theorem}[section]
\newtheorem*{theorem*}{Theorem}
\newtheorem{lemma}[theorem]{Lemma}
\newtheorem{problem}{Problem}[section]
\providecommand{\customgenericname}{}
\newcommand{\newcustomtheorem}[2]{%
	\newenvironment{#1}[1]
	{%
		\renewcommand\customgenericname{#2}%
		\renewcommand\theinnercustomgeneric{##1}%
		\innercustomgeneric
	}
	{\endinnercustomgeneric}
}
\theoremstyle{definition}
\newtheorem*{definition*}{Definition}
\newtheorem*{example*}{Example}
\newtheorem*{examples*}{Examples}
\newtheorem{remark}{Remark}[section]
\newtheorem*{remark*}{Remark}
\newtheorem*{remarks*}{Remarks}
\newtheoremstyle{named}{}{}{\itshape}{}{\bfseries}{.}{.5em}{#1\thmnote{ #3}}
\theoremstyle{named}
\newcommand{\Keywords}[1]{\ifthenelse{\isempty{#1}}{}{\smallskip \smallskip \noindent \textbf{Keywords}. #1}}
\newcommand{\MSC}[2][2020]{\ifthenelse{\isempty{#2}}{}{\smallskip \smallskip \noindent \textbf{#1MSC}. #2}}
\newcommand{\abstractnote}[1]{\ifthenelse{\isempty{#1}}{}{\smallskip \smallskip \noindent \textsuperscript{\dag}#1}}
\def\specialsection{\@startsection{section}{1}%
  \z@{\linespacing\@plus\linespacing}{.5\linespacing}%
  {\normalfont}}
\def\section{\@startsection{section}{1}%
  \z@{.7\linespacing\@plus\linespacing}{.5\linespacing}%
  {\normalfont\scshape}}
\patchcmd{\@settitle}{\uppercasenonmath\@title}{\Large\boldmath}{}{}
\patchcmd{\@settitle}{\begin{center}}{\begin{flushleft}}{}{}
\patchcmd{\@settitle}{\end{center}}{\end{flushleft}}{}{}
\patchcmd{\@setauthors}{\MakeUppercase}{\normalsize}{}{}
\patchcmd{\@setauthors}{\centering}{\raggedright}{}{}
\patchcmd{\section}{\scshape}{\large\bfseries\boldmath}{}{}
\patchcmd{\subsection}{\bfseries}{\bfseries\boldmath}{}{}
\renewcommand{\@secnumfont}{\bfseries}
\patchcmd{\@startsection}{\@afterindenttrue}{\@afterindentfalse}{}{}
\patchcmd{\abstract}{\leftmargin3pc}{\leftmargin1pc}{}{}
\def\maketitle{\par
  \@topnum\z@ 
  \@setcopyright
  \thispagestyle{empty}
  \ifx\@empty\shortauthors \let\shortauthors\shorttitle
  \else \andify\shortauthors
  \fi
  \@maketitle@hook
  \begingroup
  \@maketitle
  \toks@\@xp{\shortauthors}\@temptokena\@xp{\shorttitle}%
  \toks4{\def\\{ \ignorespaces}}
  \edef\@tempa{%
    \@nx\markboth{\the\toks4
      \@nx\MakeUppercase{\the\toks@}}{\the\@temptokena}}%
  \@tempa
  \endgroup
  \c@footnote\z@
  \@cleartopmattertags
}
\newcommand{\bE}{\mathbf{E}}
\newcommand{\cH}{\mathcal{H}}
\newcommand{\sfP}{\mathsf{P}}
\newcommand{\sfN}{\mathsf{N}}
\newcommand{\Prob}{\operatorname{Pr}}
\newcommand{\li}{\operatorname{li}}
\title[Hitting a prime]{Hitting a prime by rolling a die with infinitely many faces}
\author[S. Chern]{Shane Chern}
\address{Department of Mathematics and Statistics, Dalhousie University, Halifax, NS, B3H 4R2, Canada}
\email{chenxiaohang92@gmail.com}
\begin{document}

\maketitle

\begin{abstract}

Alon and Malinovsky recently proved that it takes on average $2.42849\ldots$~rolls of fair six-sided dice until the first time the total sum of all rolls arrives at a prime. Naturally, one may extend the scenario to dice with a different number of faces. In this paper, we prove that the expected stopping round in the game of Alon and Malinovsky is approximately $\log M$ when the number $M$ of die faces is sufficiently large.

\Keywords{Dice rolls, expected stopping round, asymptotic behavior, prime numbers.}

\MSC{60C05, 11A41.}
\end{abstract}

\section{Introduction}

Recently, Alon and Malinovsky \cite{AM2023} considered the following game:

\begin{problem}
	Assuming that $X_1,X_2,\ldots$ are independent uniform random variables on the integers $1,2,\ldots,6$, we add these random variables term by term, and stop at round $\tau$ when $X_1+\cdots + X_\tau$ reaches a prime number at the first time. Then what is the value of the expectation $\bE(\tau)$?
\end{problem}

This problem first appeared as a \emph{Student Puzzle} in the Bulletin of the Institute of Mathematical Statistics \cite{Das2017}. To gain a basic understanding of how $\bE(\tau)$ is computed, we note that the game stops at \emph{Round 1} when $(X_1)$ is any of $(2)$, $(3)$ and $(5)$, each of probability $\frac{1}{6}$, at \emph{Round 2} when $(X_1,X_2)$ is any of $(1,1)$, $(1,2)$, $(1,4)$, $(1,6)$, $(4,1)$, $(4,3)$, $(6,1)$ and $(6,5)$, each of probability $\frac{1}{6}\times \frac{1}{6} = \frac{1}{36}$, and so on. So
\begin{align*}
	\bE(\tau) = 1\times \frac{3}{6} + 2\times \frac{8}{36} + \cdots .
\end{align*}
It is notable that this game may not terminate; an example is $(X_1,X_2,X_3,\ldots) = (6,6,6,\ldots)$, a sequence of random variables equal to $6$. This observation indicates that $\bE(\tau)$ is an infinite sum in terms of probabilities. However, it was shown by Alon and Malinovsky \cite{AM2023} that $\bE(\tau)$, surprisingly, has a finite value of around $2.43$.

Although \emph{God} may play dice with the universe, \emph{He} does not play a six-sided die at all times.\footnote{Albert Einstein, in a letter to Max Born on Dec 4\textsuperscript{th} 1926, proclaimed that ``\emph{God} does not play dice with the universe.'' See \cite[pp.~90--91, Letter 52]{BE1971}.} It is then natural to ask how $\bE(\tau)$ behaves when the number of faces of the fair die varies. Consideration along this line is not new, and in fact, Martinez and Zeilberger \cite{MZ2023} already algorithmically analyzed cases of playing a die with up to $40$ faces. But once again, the expected dice rolls in \cite{MZ2023} were still estimated numerically, just akin to what was done by Alon and Malinovsky \cite{AM2023}. Thus, it remains meaningful to look for a concrete pattern between the number of dice faces and the corresponding expectation of stopping round.

Of course, the title of this paper is somewhat misleading, as one cannot create a fair die with infinitely many faces. So what really interests us is the case where the number of die faces is sufficiently large. As shown by Alon and Malinovsky \cite{AM2023}, to tackle this \emph{hitting-a-prime puzzle}, a crucial ingredient concerns the enumeration of primes in intervals of length $M$, the number of die faces. Such enumerations usually behave chaotically when we encounter \emph{short} intervals, and this fact explains why the expected stopping round $\bE(\tau)$ is only numerically evaluated in \cite{AM2023} and \cite{MZ2023}. However, when the interval length gets larger, the scenario becomes more predictable in the asymptotic sense.

To formalize our setting, we assume that $\eta_1,\eta_2,\ldots$ are independent uniform random variables on the integers $1,2,\ldots,M$. These random variables are added term by term, and the process stops at round $\tau^{(M)}$, which is called the \emph{stopping round}, when $\eta_1+\cdots + \eta_{\tau^{(M)}}$ reaches a prime number at the first time. In addition, we define
\begin{align*}
	\cH:=\left\{(\eta_1,\ldots,\eta_r)\in \{1,2,\ldots,M\}^r:\text{$\textstyle\sum\limits_{i=1}^\ell \eta_i \not\in \mathbb{P}$ for all $1\le \ell < r$}\right\}.
\end{align*}
Here $\mathbb{P}$ is the set of prime numbers. For $\eta=(\eta_1,\ldots,\eta_r)\in \cH$, we denote its \emph{size} by $|\eta|:=\sum_{i=1}^r \eta_i$, and its \emph{number of rounds} by $r(\eta):=r$. Meanwhile, we may partition $\cH$ into two disjoint subsets $\cH=\cH_\sfP \sqcup \cH_\sfN$, where
\begin{align*}
	\cH_\sfP&:=\{\eta\in \cH: |\eta|\in \mathbb{P}\},\\
	\cH_\sfN&:=\{\eta\in \cH: |\eta|\not\in \mathbb{P}\}.
\end{align*}
Then the probability $\Prob_{\sfP}(r)$ of having stopping round $r$ is
\begin{align*}
	\Prob_{\sfP}(r):= \big(\tfrac{1}{M}\big)^r\cdot \#\{\eta\in \cH_\sfP:r(\eta) = r\}.
\end{align*}
Our objective is to analyze the expected stopping round
\begin{align}\label{eq:E-def}
	\bE(\tau^{(M)}):=\sum_{r\ge 1} r \Prob_{\sfP}(r).
\end{align}

We start by applying the algorithm of Martinez and Zeilberger \cite{MZ2023} to approximate $\bE(\tau^{(M)})$ for different choices of $M$, with the summation in \eqref{eq:E-def} truncated by $1\le r\le 50$. From the data in Table \ref{ta:value}, we are driven to an asymptotic relation as stated in the main theorem.

\begin{table}[ht]
	\caption{Approximated values of $\bE(\tau^{(M)})$}\label{ta:value}
	\renewcommand\arraystretch{1.25}
	\noindent
	\begin{tabular}{ccc}
		\hline
		$M$ & $\bE(\tau^{(M)})$ & $\log M$\\
		\hline
		$10$ & $\approx 2.97$ & $2.30258\ldots$\\
		$20$ & $\approx 3.47$ & $2.99573\ldots$\\
		$50$ & $\approx 4.51$ & $3.91202\ldots$\\
		$100$ & $\approx 5.27$ & $4.60517\ldots$\\
		$200$ & $\approx 5.91$ & $5.29831\ldots$\\
		$500$ & $\approx 6.89$ & $6.21460\ldots$\\
		$1000$ & $\approx 7.59$ & $6.90775\ldots$\\
		\hline
	\end{tabular}
\end{table}

\begin{theorem}\label{th:main}
	As $M\to +\infty$,
	\begin{align}\label{eq:main}
		\bE(\tau^{(M)}) = \log M + O(\log\log M).
	\end{align}
\end{theorem}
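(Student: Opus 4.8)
The plan is to analyze the survival probabilities $q_r:=\Prob(\tau^{(M)}>r)$, i.e.\ the probability that the partial sums $S_\ell:=\eta_1+\cdots+\eta_\ell$ are composite for every $1\le \ell\le r$, and to exploit the elementary tail-sum identity $\bE(\tau^{(M)})=\sum_{r\ge 0}\Prob(\tau^{(M)}>r)=\sum_{r\ge 0}q_r$. Writing $u_r(m)$ for the probability that $S_r=m$ with $S_1,\dots,S_r$ all composite (so $u_0$ is the point mass at $0$ and $\mathrm{supp}\,u_r\subseteq\{r,r+1,\dots,rM\}$), a one-step conditioning gives the recursion $q_{r-1}-q_r=\sum_m u_{r-1}(m)\,p(m)$, where $p(m):=\frac{1}{M}\big(\pi(m+M)-\pi(m)\big)$ is the chance of landing on a prime in one roll from a sum equal to $m$ and $\pi$ denotes the prime-counting function. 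Everything then hinges on two-sided estimates for $p(m)$.

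The key analytic input is the prime number theorem with its classical error term. I would first observe that only rounds with $r\le (\log M)^2$ can matter, so the relevant sums obey $m\le M(\log M)^2$; for such $m$ the interval $(m,m+M]$ has length $M=m^{1-o(1)}$ and is therefore \emph{long}, so no short-interval (Huxley-type) machinery is needed. From $\pi(x)=\li(x)+O\big(x e^{-c\sqrt{\log x}}\big)$ I would deduce, uniformly for $0\le m\le M(\log M)^2$, the lower bound $p(m)\ge \frac{1-o(1)}{\log(m+M)}\ge \frac{1}{\log M+O(\log\log M)}$ and, using that $\li(m+M)-\li(m)$ is decreasing in $m$ and equals $\frac{M}{\log M}\big(1+O(\tfrac{1}{\log M})\big)$ at $m=0$, the matching upper bound $p(m)\le \frac{1}{\log M}\big(1+O(\tfrac{1}{\log M})\big)$. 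The point worth emphasizing is that for small $m$ the window $(m,m+M]$ sits near the origin, so $p(m)\approx 1/\log M$ rather than $1/\log m$; this is exactly what pins the leading constant to $1$.

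For the upper bound on $\bE(\tau^{(M)})$ I would feed the lower bound on $p$ into the recursion: since $\mathrm{supp}\,u_{r-1}\subseteq[r-1,(r-1)M]$, the minimum of $p$ over the support is at least $\frac{1}{\log M+O(\log\log M)}$ for $r\le (\log M)^2$, whence $q_r\le\big(1-\frac{1}{\log M+O(\log\log M)}\big)^r$ and $\sum_{r\le(\log M)^2}q_r\le \log M+O(\log\log M)$. For the lower bound I would telescope $1=\sum_{r\ge1}(q_{r-1}-q_r)$ and use the upper bound on $p$: because $\mathrm{supp}\,u_{r-1}\subseteq[0,(\log M)^2M]$ for $r\le(\log M)^2+1$, one gets $q_{r-1}-q_r\le \frac{1+O(1/\log M)}{\log M}\,q_{r-1}$, and summing yields $1-o(1)\le \frac{1+O(1/\log M)}{\log M}\,\bE(\tau^{(M)})$, i.e.\ $\bE(\tau^{(M)})\ge \log M-O(1)$. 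Combining the two bounds gives \eqref{eq:main}.

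The one genuinely technical point is the tail $\sum_{r>(\log M)^2}q_r$, where the sums can be so large that short-interval prime estimates are unavailable. I would control it by a robust covering argument: along any surviving path the successive windows $(S_i,S_i+M]$ overlap and hence cover every integer the walk passes, so by Chebyshev's bound $\pi(2m)-\pi(m)\gg m/\log m$ the cumulative one-step stopping probabilities accrued while the sum crosses a dyadic block $[m,2m]$ are $\gg m/(M\log m)$ \emph{deterministically}. This forces $q_r$ to decay faster than any geometric rate once $r$ exceeds a fixed power of $\log M$, so the tail is negligible and, in particular, $\bE(\tau^{(M)})<\infty$. I expect this tail estimate, rather than the main-term analysis, to be the part demanding the most care, since it must be made uniform over all scales using only elementary (sieve/Chebyshev) prime bounds.
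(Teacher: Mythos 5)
Your overall architecture is essentially the paper's, up to reformulation: the paper proves two-sided estimates $\frac{M}{\log M+4\log\log M}\le\pi(k+M)-\pi(k)\le\frac{M}{\log M-4\log\log M}$ uniformly for $0\le k\le M(\log M)^3$, deduces geometric bounds on $\Prob_{\sfN}(r)$ and $\Prob_{\sfP}(r)$, and evaluates $\sum_{r\le R_1}r\Prob_{\sfP}(r)$ directly, whereas you work with $q_r=\Prob_{\sfN}(r)$ through the tail-sum identity and a telescoping lower bound; these are equivalent, and your main-term analysis is sound (modulo the small technicality that for $m$ near $0$ the approximation $\pi\approx\li$ is not directly applicable, which the paper handles on $0\le k\le M/(\log M)^3$ by the trivial bound $\pi(k)\le k$). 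Your covering idea for the far tail is also morally the paper's inductive bound $\Prob(r,k)\le\left(1-\frac{1}{M}\right)^{\pi(k)}$, and your sketch can indeed be made rigorous: since $\mathbf{1}_{A_r}\prod_{i<r}\left(1-p(S_i)\right)^{-1}$ is a martingale (with $A_r$ the survival event), one gets $q_r\le\exp\left(-\min_{\text{paths}}\sum_{i<r}p(s_i)\right)\le\exp\left(-\pi(r)/M\right)$ by your window-covering observation, which disposes of $r>M^2$ exactly as the paper's level-set computation does.

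The genuine gap is quantitative: your cutoff $(\log M)^2$ is too low, and the claim that the covering argument forces decay ``faster than any geometric rate once $r$ exceeds a fixed power of $\log M$'' is false. An $r$-step survivor can have final sum of size about $r$, so the covering/Chebyshev hazard is only of order $\pi(r+M)/M$; for all $r\le M$ this is $O(1/\log M)$, making the bound $q_r\le e^{-O(1/\log M)}$ vacuous, and it beats the summation weights only once $r\gg M(\log M)^2$ or so. Meanwhile your geometric bound expires at $r=(\log M)^2$, where it yields only $q_{(\log M)^2}\le M^{-1}(\log M)^{4+o(1)}$. In the intermediate range $(\log M)^2<r\lesssim M(\log M)^2$ you therefore have nothing beyond monotonicity of $q_r$, and for instance $\sum_{(\log M)^2<r\le M}q_r$ can only be bounded by $(\log M)^{4+o(1)}$, which already swamps the claimed $O(\log\log M)$ error term (with the weight $r$, as in $\sum_r r\Prob_{\sfP}(r)$, the same range even contributes a positive power of $M$). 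The repair is precisely the paper's parameter choice: your long-interval PNT estimates hold verbatim for $0\le m\le M(\log M)^3$, since the error $xe^{-c\sqrt{\log x}}$ is still negligible against $M/\log M$ there; taking $R_1=\lfloor(\log M)^3\rfloor$ gives $q_{R_1}\le\exp\left(-(1+o(1))(\log M)^2\right)$, which is super-polynomially small, so the middle range $R_1<r\le M^2$ contributes at most $M^4q_{R_1}=o(1)$, and your covering bound (where now $\pi(r)/M\ge M/(2\log M)$) finishes $r>M^2$. With that single change of cutoff your argument closes and matches the paper's.
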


\subsubsection*{Notation}

We adopt the conventional Bachmann--Landau symbols. The big-$O$ notation is defined in the sense that $f(x)=O\big(g(x)\big)$ means that there exists an absolute constant $C$ such that $|f(x)|\le C g(x)$. Also, we have the small-$o$ notation: $f(x)=o\big(g(x)\big)$ means that $\lim f(x)/g(x)=0$. Finally, if $\lim f(x)/g(x)=1$, then we write $f(x)\sim g(x)$.

\section{Counting primes}

\subsection{The prime-counting function}

We begin with a collection of known results on the \emph{prime-counting function} $\pi(x)$, which enumerates the number of primes less than or equal $x$, where $x$ is a real number. The growth rate of the prime-counting function is depicted by the \emph{Prime Number Theorem}:
\begin{align}
	\pi(x)\sim \frac{x}{\log x}\sim \li(x),
\end{align}
as $x\to +\infty$, where $\li(x)$ is the \emph{logarithmic integral function}
\begin{align*}
	\li(x):=\int_0^x \frac{dt}{\log t}.
\end{align*}
More precisely, Rosser and Schoenfeld \cite[p.~69, eqs.~(3.4) \& (3.5)]{RS1962} established explicit upper and lower bounds for $\pi(x)$:
\begin{alignat}{2}
	\pi(x)&< \frac{x}{\log x - 1.5} && \qquad\qquad(x\ge 5),\label{eq:pi-upper}\\
	\pi(x)&> \frac{x}{\log x} && \qquad\qquad(x\ge 17).\label{eq:pi-lower}
\end{alignat}
The approximation of $\pi(x)$ by $\li(x)$ is sharper: De la Vall\'ee Poussin \cite{dlV1899} proved that for a certain positive constant $a$,
\begin{align*}
	\pi(x)=\li(x)+O\big(xe^{-a\sqrt{\log x}}\big).
\end{align*}
See also \cite[p.~65, Theorem 23, eq.~(43)]{Ing1990}. Explicit values of the constant $a$ were computed by, for example, Ford \cite{For2004}. Since $e^{a\sqrt{\log x}}$ increases more rapidly than any positive power of $\log x$, we may loosen the above error term as
\begin{align*}
	\pi(x)=\li(x)+o\left(\frac{x}{(\log x)^5}\right).
\end{align*}
In other words, there is a positive constant $C$ such that for all $x\ge C$,
\begin{align}\label{eq:pi-li}
	|\pi(x)-\li(x)|<\frac{x}{2(\log x)^5}.
\end{align}

\subsection{Assumptions and parameters}

Recall that the purpose of this paper is to study the asymptotic behavior of $\bE(\eta^{(M)})$ as $M\to +\infty$. Thus, it is safe to look at sufficiently large $M$. That is to say, we may choose a positive constant $M_0>e$ whose exact value is of \emph{no} concern, and always assume that $M\ge M_0$. Here $M_0$ satisfies a couple of conditions as follows:
\begin{itemize}
	\item[\textbf{(C.1)}] For all $x\ge \frac{M_0}{(\log M_0)^3}$, the inequalities \eqref{eq:pi-upper}, \eqref{eq:pi-lower} and \eqref{eq:pi-li} hold.
\end{itemize}

To simplify our analysis of the error terms, it is helpful to further assume that $M_0$ is such that
\begin{itemize}
	\item[\textbf{(C.2)}] The function $\frac{x}{(\log x)^5}$ is increasing on the interval $\big[\frac{M_0}{(\log M_0)^3},+\infty\big)$.
\end{itemize}

\begin{remark}
	Since $\log x$ is positive-valued and increasing on $(1,+\infty)$, it is clear that with the above assumption, the functions $\frac{x}{(\log x)^\delta}$ with $0<\delta\le 5$ are also increasing on the same interval.
\end{remark}

Meanwhile, in our derivation of Lemma \ref{le:pi-interval}, there will be a handful of inequalities (exclusively in $M$) that are marked by ``~$!$~'', so $M_0$ is chosen such that
\begin{itemize}
	\item[\textbf{(C.3)}] For all $M\ge M_0$, these inequalities are valid.
\end{itemize}

\begin{remark}
	For example, we will need to use
	\begin{align*}
		\frac{M}{\log M}-\frac{M}{(\log M)^3} \ge_! \frac{M}{\log M+4\log\log M}.
	\end{align*}
	Clearly, this is true when $M$ is large enough since
	\begin{align*}
		\left(\frac{M}{\log M}-\frac{M}{(\log M)^3}\right) - \frac{M}{\log M+4\log\log M} \to +\infty,
	\end{align*}
	as $M\to +\infty$, thereby explaining why $M_0$ could be chosen. We leave the (\emph{easy}!) examination of other such inequalities to the interested reader.
\end{remark}

Furthermore, to make the statement in \eqref{eq:pi-interval-2} robust, we need to require that $M_0$ is such that
\begin{itemize}
	\item[\textbf{(C.4)}] For all $M\ge M_0$, we have $\log M-4\log\log M>1$.
\end{itemize}

Finally, we choose the following two parameters that are critical in our analysis:
\begin{align}
	R_1 &:= \lfloor(\log M)^3\rfloor,\label{eq:para-1}\\
	R_2 &:= M^2.\label{eq:para-2}
\end{align}
In addition, the last condition imposed on $M_0$ is:
\begin{itemize}
	\item[\textbf{(C.5)}] For all $M\ge M_0$, we have $1<R_1<R_2$.
\end{itemize}

\subsection{Counting primes in intervals of length $M$}

Now our core concern lies in bounding the number of primes in the interval $[k+1,k+M]$ for small $k$.

\begin{lemma}\label{le:pi-interval}
	Suppose $M\ge M_0$. Then for any integer $k$ with $0\le k\le M (\log M)^3$,
	\begin{align}
		\pi(k+M)-\pi(k)&\ge \frac{M}{\log M+4\log\log M},\label{eq:pi-interval-1}\\
		\pi(k+M)-\pi(k)&\le \frac{M}{\log M-4\log\log M}.\label{eq:pi-interval-2}
	\end{align}
\end{lemma}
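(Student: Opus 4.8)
The plan is to replace the prime-counting function by the logarithmic integral via \eqref{eq:pi-li}, reducing everything to the transparent quantity $\li(k+M)-\li(k)=\int_k^{k+M}\frac{dt}{\log t}$, and then to exploit that the integrand $1/\log t$ is decreasing. Concretely, I would write
\[
\pi(k+M)-\pi(k)=\big[\li(k+M)-\li(k)\big]+\big[\pi(k+M)-\li(k+M)\big]-\big[\pi(k)-\li(k)\big],
\]
observe that the main term has size $\asymp M/\log M$, and show that each error bracket is $O\big(M/(\log M)^2\big)$, hence negligible. The crude bound $k+M\le 2M(\log M)^3$ (from $k\le M(\log M)^3$) is the workhorse: it yields both $\log(k+M)\le\log M+3\log\log M+\log 2$ and $\frac{k+M}{(\log(k+M))^5}\le\frac{2M}{(\log M)^2}$. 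Since $k+M\ge M\ge M_0$, the approximation \eqref{eq:pi-li} always applies at the right endpoint; the two endpoint errors are merged into a single $\le\frac{k+M}{(\log(k+M))^5}$ using the monotonicity assumption C.2, which gives $\frac{k}{(\log k)^5}\le\frac{k+M}{(\log(k+M))^5}$ whenever $k\ge\frac{M_0}{(\log M_0)^3}$.

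For the lower bound \eqref{eq:pi-interval-1} I would bound the main term below by its right-endpoint value, $\int_k^{k+M}\frac{dt}{\log t}\ge\frac{M}{\log(k+M)}\ge\frac{M}{\log M+3\log\log M+\log 2}$. The point is that this last expression exceeds the target $\frac{M}{\log M+4\log\log M}$ by a surplus of order $\frac{M\log\log M}{(\log M)^2}$, which dominates the error $O\big(M/(\log M)^2\big)$ for large $M$ (a ``$!$'' inequality). For the small values of $k$ where $\li$ is unavailable at the left endpoint — note in particular the pole $\li(1)=-\infty$ — I would instead apply the explicit Rosser--Schoenfeld bound \eqref{eq:pi-lower} directly: $\pi(k+M)-\pi(k)\ge\frac{k+M}{\log(k+M)}-\pi\big(\tfrac{M_0}{(\log M_0)^3}\big)$, the subtracted term being an absolute constant absorbed by the same surplus.

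For the upper bound \eqref{eq:pi-interval-2} the decreasing integrand only gives $\int_k^{k+M}\frac{dt}{\log t}\le\frac{M}{\log k}$, which is useless when $k$ is small, so I would split at $k=\frac{M}{(\log M)^3}$. When $k\ge\frac{M}{(\log M)^3}$ one has $\log k\ge\log M-3\log\log M$, whence the main term is $\le\frac{M}{\log M-3\log\log M}$, leaving a surplus of order $\frac{M\log\log M}{(\log M)^2}$ below the target $\frac{M}{\log M-4\log\log M}$ (positive by C.4) to absorb the error. When $0\le k<\frac{M}{(\log M)^3}$, so that $k+M\le M\big(1+(\log M)^{-3}\big)$ is essentially $M$, I would avoid the integral entirely and use $\pi(k+M)-\pi(k)\le\pi(k+M)\le\frac{k+M}{\log(k+M)-1.5}$ from \eqref{eq:pi-upper}; this lies below $\frac{M}{\log M-4\log\log M}$ once $4\log\log M>1.5$ (another ``$!$'' inequality), and discarding $\pi(k)$ costs at most $O\big(M/(\log M)^4\big)$, well within tolerance.

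The step I expect to be the main obstacle is the error bookkeeping rather than any single estimate: at every stage one must verify that the genuinely delicate gain of order $\frac{M\log\log M}{(\log M)^2}$ — arising precisely from the gap between the ``$3$'' produced by $k+M\le 2M(\log M)^3$ and the ``$4$'' appearing in the statement — strictly beats the accumulated error of order $\frac{M}{(\log M)^2}$, and that the two endpoint errors may indeed be combined. This is exactly where the monotonicity assumption C.2 and the finitely many ``$!$'' inequalities of C.3 are consumed. The secondary difficulty is the small-$k$ regime, where \eqref{eq:pi-li} fails at the left endpoint; it is resolved uniformly by replacing the $\li$-machinery with the explicit bounds \eqref{eq:pi-upper} and \eqref{eq:pi-lower}, which simultaneously sidesteps the singularity of $\li$ at $1$.
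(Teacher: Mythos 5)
Your proposal is correct and follows essentially the same route as the paper: approximate $\pi(k+M)-\pi(k)$ by $\li(k+M)-\li(k)$ via \eqref{eq:pi-li}, merging the two endpoint errors through the monotonicity of $x/(\log x)^5$ from (C.2), bound the integral by its endpoint values $\frac{M}{\log(k+M)}$ and $\frac{M}{\log k}$, fall back on the explicit Rosser--Schoenfeld bounds \eqref{eq:pi-upper} and \eqref{eq:pi-lower} in the small-$k$ regime, and let the $\log\log M$-sized surplus (the gap between the ``$3$'' from $k\le M(\log M)^3$ and the ``$4$'' in the statement) absorb the $O\big(M/(\log M)^2\big)$ errors via ``$!$''-type inequalities. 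The only cosmetic difference is in the lower bound, where you split at the absolute constant $M_0/(\log M_0)^3$ and use $\pi(k+M)\ge \frac{k+M}{\log(k+M)}$ minus a constant, while the paper splits at $M/(\log M)^3$ and uses the elementary bound $\pi(k+M)-\pi(k)\ge \pi(M)-k$; both are valid and interchangeable.
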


\begin{proof}
	We first consider the case where $0\le k\le \frac{M}{(\log M)^3}$. For the lower bound, it is clear that
	\begin{align*}
		\pi(k+M)-\pi(k) &\ge \pi(k+M)-k\\
		&\ge \pi(M)-\frac{M}{(\log M)^3}\\
		\text{\tiny (by \eqref{eq:pi-lower})}&\ge \frac{M}{\log M}-\frac{M}{(\log M)^3}\\
		&\ge_! \frac{M}{\log M+4\log\log M}.
	\end{align*}
	For the upper bound, we have
	\begin{align*}
		\pi(k+M)-\pi(k) &\le \pi(k+M)\\
		&\le \pi\big(\tfrac{M}{(\log M)^3}+M\big)\\
		\text{\tiny (by \eqref{eq:pi-upper})}&\le \frac{\frac{M}{(\log M)^3}+M}{\log \big(\frac{M}{(\log M)^3}+M\big)-1.5}\\
		&\le \frac{\frac{M}{(\log M)^3}+M}{\log M-1.5}\\
		&\le_! \frac{M}{\log M-4\log\log M}.
	\end{align*}

	When $\frac{M}{(\log M)^3}<k\le M(\log M)^3$, we shall estimate $\pi(k+M)-\pi(k)$ by $\li(k+M)-\li(k)$. Let us begin with the error term in light of \eqref{eq:pi-li}:
	\begin{align*}
		&\big|\big(\pi(k+M)-\pi(k)\big)-\big({\li(k+M)}-\li(k)\big)\big|\\
		&\qquad\qquad\le \big|\pi(k+M)-\li(k+M)\big| + \big|\pi(k)-\li(k)\big|\\
		&\qquad\qquad\le \frac{k+M}{2\big(\!\log(k+M)\big)^5} + \frac{k}{2\big(\!\log k\big)^5}\\
		&\qquad\qquad\le \frac{M(\log M)^3+M}{\big(\!\log(M(\log M)^3+M)\big)^5}\\
		&\qquad\qquad\le \frac{M(\log M)^3+M}{(\log M)^5}.
	\end{align*}
	Meanwhile,
	\begin{align*}
		\li(k+M)-\li(k) = \int_k^{k+M}\frac{dt}{\log t}.
	\end{align*}
	It follows that
	\begin{align*}
		\li(k+M)-\li(k) &\ge \frac{M}{\log (k+M)}\\
		&\ge \frac{M}{\log (M(\log M)^3+M)},
	\end{align*}
	and that
	\begin{align*}
		\li(k+M)-\li(k) &\le \frac{M}{\log k}\\
		&\le \frac{M}{\log (M(\log M)^{-3})}.
	\end{align*}
	Finally, using the facts that
	\begin{align*}
		\frac{M}{\log M+4\log\log M}&\le_! \frac{M}{\log (M(\log M)^3+M)} - \frac{M(\log M)^3+M}{(\log M)^5},\\
		\frac{M}{\log M-4\log\log M}&\ge_! \frac{M}{\log (M(\log M)^{-3})} + \frac{M(\log M)^3+M}{(\log M)^5},
	\end{align*}
	the required inequalities \eqref{eq:pi-interval-1} and \eqref{eq:pi-interval-2} are established.
\end{proof}

\section{Rolling dice}

We start by pointing out that the parameter $R_1$ in \eqref{eq:para-1} is chosen as we want to truncate the expectation
\begin{align*}
	\bE(\tau^{(M)}) = \sum_{r\ge 1}r \Prob_{\sfP}(r)
\end{align*}
according to whether the number $r$ of rolls is no larger than $R_1$ or not. To be precise, we shall prove the following asymptotic relations that immediately imply Theorem \ref{th:main}.

\begin{theorem}
	As $M\to +\infty$,
	\begin{align}
		\sum_{r= 1}^{R_1} r \Prob_{\sfP}(r) &= \log M + O(\log\log M),\label{eq:Exp-R1}\\
		\sum_{r> R_1} r \Prob_{\sfP}(r) &= o(1).\label{eq:Exp-R2}
	\end{align}
\end{theorem}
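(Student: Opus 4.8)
The plan is to establish the two asymptotic relations \eqref{eq:Exp-R1} and \eqref{eq:Exp-R2} by setting up a recursion for the counting quantities and using Lemma \ref{le:pi-interval} as the core input. The key observation is that to control $\Prob_{\sfP}(r) = (1/M)^r \cdot \#\{\eta\in\cH_\sfP : r(\eta)=r\}$, I would first track the "survival" quantity
\begin{align*}
	N(r) := \#\{\eta\in\cH : r(\eta)=r,\ |\eta|\not\in\mathbb{P}\} = \#\{\eta\in\cH_\sfN : r(\eta)=r\},
\end{align*}
i.e.\ the number of length-$r$ sequences whose partial sums avoid all primes. A length-$(r{+}1)$ surviving or stopping sequence arises by appending $\eta_{r+1}\in\{1,\ldots,M\}$ to a surviving length-$r$ sequence $\eta$ with $|\eta|=k$; the number of such extensions landing on a prime is exactly $\pi(k+M)-\pi(k)$, while the number avoiding primes is $M-(\pi(k+M)-\pi(k))$. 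Thus if I write $p(r) := \#\{\eta\in\cH_\sfP : r(\eta)=r\}$ for the stopping counts, the machinery of Lemma \ref{le:pi-interval} bounds each "prime density" $\big(\pi(k+M)-\pi(k)\big)/M$ between $\frac{1}{\log M + 4\log\log M}$ and $\frac{1}{\log M - 4\log\log M}$, provided the relevant sums $k$ stay in the admissible range $0\le k\le M(\log M)^3$.

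For the main contribution \eqref{eq:Exp-R1}, I would argue that conditional on survival through round $r$, the chance of stopping at round $r{+}1$ is, uniformly over admissible configurations, approximately $p := 1/\log M$ with a relative error of size $O(\log\log M/\log M)$. This makes the stopping round behave like a geometric random variable with success probability $p$, whose expectation is $1/p = \log M$. Concretely, I expect to show that the survival probability after $r$ rounds is squeezed between $\big(1 - \frac{1}{\log M - 4\log\log M}\big)^r$ and $\big(1 - \frac{1}{\log M + 4\log\log M}\big)^r$ (so long as all partial sums remain admissible), and that summing $r\,\Prob_\sfP(r)$ against these geometric-type bounds yields $\log M + O(\log\log M)$. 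The factor $4\log\log M$ perturbing $\log M$ in the denominators is precisely what produces the $O(\log\log M)$ error, since $\frac{1}{\log M \mp 4\log\log M} = \frac{1}{\log M} + O\big(\frac{\log\log M}{(\log M)^2}\big)$ and the geometric expectation amplifies this by the factor $\log M$.

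The tail estimate \eqref{eq:Exp-R2} should follow from the same survival bound: since each round kills a surviving sequence with probability at least $\frac{1}{\log M + 4\log\log M}$, the probability of surviving past $R_1 = \lfloor(\log M)^3\rfloor$ rounds decays like $\big(1 - \frac{1}{2\log M}\big)^{R_1} \le \exp\big(-\frac{R_1}{2\log M}\big) = \exp\big(-\Omega((\log M)^2)\big)$, which crushes the polynomial factor $r$ in the sum $\sum_{r>R_1} r\,\Prob_\sfP(r)$ and forces it to $o(1)$. Here the generous choice $R_2 = M^2$ provides a coarse uniform cap on $|\eta|$ for bounding the genuinely long tail beyond $R_1$.

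\emph{The main obstacle} is the requirement in Lemma \ref{le:pi-interval} that $k \le M(\log M)^3$, which fails once the partial sum grows large; for short sequences ($r \le R_1$) the partial sum is at most $r M \le M(\log M)^3$, so the lemma applies throughout the dominant range, but I must handle the boundary carefully and ensure the geometric squeeze is not broken by configurations drifting out of the admissible window. The delicate bookkeeping will be in converting the \emph{uniform} per-step bounds on the prime density into bounds on the aggregate sums $\sum_r r\, p(r)/M^r$ without losing the constant in front of $\log M$—that is, in verifying that the upper and lower geometric comparisons both collapse to the same leading term $\log M$ with only a $O(\log\log M)$ gap, rather than a multiplicative distortion.
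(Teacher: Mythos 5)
Your treatment of the main term \eqref{eq:Exp-R1} matches the paper's: the geometric squeeze $\big(1-\tfrac{1}{L}\big)^r \le \Prob_\sfN(r) \le \big(1-\tfrac{1}{U}\big)^r$ with $L=\log M-4\log\log M$, $U=\log M+4\log\log M$, the resulting bounds $\tfrac{1}{U}\big(1-\tfrac1L\big)^{r-1} \le \Prob_\sfP(r) \le \tfrac1L\big(1-\tfrac1U\big)^{r-1}$ for $r\le R_1$, and the observation that $\tfrac{L^2}{U}$ and $\tfrac{U^2}{L}$ are both $\log M+O(\log\log M)$ are exactly the paper's Lemmas \ref{le:Pr-N-bounds} and \ref{le:Pr-P-bounds} and the two-sided estimate. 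Your mid-range bound is also the paper's: for $R_1<r\le R_2$ monotonicity gives $\Prob_\sfP(r)\le \Prob_\sfN(R_1)\le \big(1-\tfrac1U\big)^{R_1}\approx e^{-(\log M)^2}$, which beats the finite sum $\sum_{r=R_1+1}^{R_2} r\le R_2^2=M^4$.

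However, there is a genuine gap in your tail argument beyond $R_2$. Your claim that the survival bound ``crushes the polynomial factor $r$ in the sum $\sum_{r>R_1} r\,\Prob_\sfP(r)$'' cannot work as stated: once partial sums exceed $M(\log M)^3$, Lemma \ref{le:pi-interval} no longer applies, and in fact no per-round lower bound on the killing probability is available --- intervals of length $M$ starting at, say, $(M+1)!+1$ contain no primes at all, so the per-round stopping probability can be exactly zero. The best your machinery yields for $r>R_1$ is the \emph{constant-in-$r$} bound $\Prob_\sfP(r)\le \Prob_\sfN(R_1)$, and $\sum_{r>R_2} r\cdot c$ diverges for any fixed $c>0$; indeed, without a bound decaying in $r$ you cannot even conclude that $\bE(\tau^{(M)})$ is finite. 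The paper closes this with a separate ingredient you are missing: the inductive bound $\Prob(r,k)\le \big(1-\tfrac1M\big)^{\pi(k)}$ (the analog of Alon--Malinovsky's Proposition 1), which decays in $r$ because $k\ge r$ forces $\pi(k)\to\infty$. The paper then groups the non-prime sizes $k$ by the value $\ell=\pi(k)$, shows each fiber $S_\ell$ lies in $[\ell+1,\ell^2-1]$ via \eqref{eq:pi-lower}, and bounds the tail by $\sum_{\ell\ge \pi(M^2)} \ell^8\big(1-\tfrac1M\big)^\ell \to 0$, using $\pi(M^2)\ge \tfrac{M^2}{2\log M}$. Your ``coarse uniform cap on $|\eta|$'' from $R_2=M^2$ is not a cap at all ($|\eta|$ is unbounded over the tail); $R_2$ merely marks where the constant bound stops sufficing and the $\big(1-\tfrac1M\big)^{\pi(k)}$ mechanism must take over.
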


\subsection{Probabilities}

In this part, we consider the probability functions
\begin{align*}
	\Prob_{\sfP}(r)&:= \big(\tfrac{1}{M}\big)^r\cdot \#\{\eta\in \cH_\sfP:r(\eta) = r\},\\
	\Prob_{\sfN}(r)&:= \big(\tfrac{1}{M}\big)^r\cdot \#\{\eta\in \cH_\sfN:r(\eta) = r\},\\
	\Prob(r,k)&:= \big(\tfrac{1}{M}\big)^r\cdot \#\{\eta\in \cH:r(\eta) = r\text{ and }|\eta|=k\}.
\end{align*}
Clearly, if $r(\eta)=r$, then $r\le |\eta|\le Mr$. It follows that $\Prob(r,k)$ is zero whenever $k$ is not in the interval $[r,Mr]$. Also,
\begin{align}
	\Prob_{\sfP}(r) &= \sum_{\substack{r\le k\le Mr\\ k\in \mathbb{P}}} \Prob(r,k),\label{eq:P-(r,k)}\\
	\Prob_{\sfN}(r) &= \sum_{\substack{r\le k\le Mr\\ k\not\in \mathbb{P}}} \Prob(r,k).\label{eq:N-(r,k)}
\end{align}
Meanwhile, for $r\ge 2$, if $(\eta_1,\ldots,\eta_r)\in \cH_\sfP$, we have $(\eta_1,\ldots,\eta_{r-1})\in \cH_\sfN$. Hence,
\begin{align*}
	\Prob_{\sfP}(r) &= \sum_{\substack{r-1\le k\le M(r-1)\\k\not\in \mathbb{P}}} \Prob(r-1,k)\cdot \frac{\#\{j\in \{1,\ldots,M\}:k+j\in \mathbb{P}\}}{M}.
\end{align*}
It follows that
\begin{align}\label{eq:P-N-lower}
	\Prob_{\sfP}(r)\ge \Prob_{\sfN}(r-1)\cdot \frac{\underset{r-1\le k\le M(r-1)}{\min}\#\{j\in \{1,\ldots,M\}:k+j\in \mathbb{P}\}}{M},
\end{align}
and
\begin{align}\label{eq:P-N-upper}
	\Prob_{\sfP}(r)\le \Prob_{\sfN}(r-1)\cdot \frac{\underset{r-1\le k\le M(r-1)}{\max}\#\{j\in \{1,\ldots,M\}:k+j\in \mathbb{P}\}}{M}.
\end{align}
Similarly,
\begin{align}\label{eq:N-N-lower}
	\Prob_{\sfN}(r)\ge \Prob_{\sfN}(r-1)\cdot \frac{\underset{r-1\le k\le M(r-1)}{\min}\#\{j\in \{1,\ldots,M\}:k+j\not\in \mathbb{P}\}}{M},
\end{align}
and
\begin{align}\label{eq:N-N-upper}
	\Prob_{\sfN}(r)\le \Prob_{\sfN}(r-1)\cdot \frac{\underset{r-1\le k\le M(r-1)}{\max}\#\{j\in \{1,\ldots,M\}:k+j\not\in \mathbb{P}\}}{M}.
\end{align}
In particular,
\begin{align}
	\Prob_{\sfP}(r)\le \Prob_{\sfN}(r-1),\label{eq:P-r-(r-1)}\\
	\Prob_{\sfN}(r)\le \Prob_{\sfN}(r-1).\label{eq:N-r-(r-1)}
\end{align}

Now we bound $\Prob_{\sfN}(r)$ for $r\le R_1$ where $R_1$ is as in \eqref{eq:para-1}.

\begin{lemma}\label{le:Pr-N-bounds}
	Suppose $M\ge M_0$. Then for $r\le R_1$,
	\begin{align}
		\Prob_{\sfN}(r)&\ge \left(1-\frac{1}{\log M-4\log\log M}\right)^{r},\label{eq:Pr-N-bounds-l}\\
		\Prob_{\sfN}(r)&\le \left(1-\frac{1}{\log M+4\log\log M}\right)^{r}.\label{eq:Pr-N-bounds-u}
	\end{align}
\end{lemma}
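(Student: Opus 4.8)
The plan is to prove both inequalities simultaneously by induction on $r$, feeding the interval estimate of Lemma \ref{le:pi-interval} into the multiplicative recursions \eqref{eq:N-N-lower} and \eqref{eq:N-N-upper}. The starting observation is that the counts appearing in those recursions are differences of the prime-counting function: as $j$ runs over $\{1,\dots,M\}$, the integer $k+j$ runs over $\{k+1,\dots,k+M\}$, so
\[
\#\{j\in\{1,\dots,M\}:k+j\in\mathbb{P}\}=\pi(k+M)-\pi(k),
\]
and the complementary (non-prime) count is therefore $M-\big(\pi(k+M)-\pi(k)\big)$. Everything then reduces to controlling this quantity uniformly in the relevant range of $k$.

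Next I would verify the range hypothesis of Lemma \ref{le:pi-interval}. In passing from $r-1$ to $r$, the index $k$ in \eqref{eq:N-N-lower}–\eqref{eq:N-N-upper} ranges over $r-1\le k\le M(r-1)$; for $r\le R_1=\lfloor(\log M)^3\rfloor$ we have $M(r-1)\le M\big((\log M)^3-1\big)<M(\log M)^3$, so every such $k$ falls in $[0,M(\log M)^3]$ and the bounds \eqref{eq:pi-interval-1}–\eqref{eq:pi-interval-2} apply uniformly. Dividing the non-prime count by $M$ and inserting those bounds—noting that complementation swaps the roles of the two estimates, so the \emph{upper} bound \eqref{eq:pi-interval-2} on the prime count produces the \emph{lower} bound on the non-prime fraction and vice versa—gives, for all admissible $k$,
\[
1-\frac{1}{\log M-4\log\log M}\le\frac{M-\big(\pi(k+M)-\pi(k)\big)}{M}\le 1-\frac{1}{\log M+4\log\log M}.
\]
Taking the minimum over $k$ in \eqref{eq:N-N-lower} and the maximum over $k$ in \eqref{eq:N-N-upper}, the factors multiplying $\Prob_\sfN(r-1)$ are bounded below and above by the two displayed quantities.

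With this the induction is routine. For the base case $r=1$ the recursion degenerates (only $k=0$ occurs), giving $\Prob_\sfN(1)=1-\pi(M)/M$, and Lemma \ref{le:pi-interval} with $k=0$ returns exactly the two claimed bounds with exponent $1$. For the inductive step I would multiply the bounds already established for $\Prob_\sfN(r-1)$ by the corresponding factor above; condition \textbf{(C.4)}, which guarantees $\log M-4\log\log M>1$, is precisely what makes the lower-bound factor $1-\tfrac{1}{\log M-4\log\log M}$ strictly positive, so that multiplication preserves the direction of the inequality. Telescoping the powers then yields $\big(1-\tfrac{1}{\log M-4\log\log M}\big)^{r}$ and $\big(1-\tfrac{1}{\log M+4\log\log M}\big)^{r}$, as required.

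The only genuine point requiring care—and thus the main obstacle—is the bookkeeping around the admissible range of $k$: Lemma \ref{le:pi-interval} holds only for $k\le M(\log M)^3$, which is exactly why the truncation threshold is $R_1=\lfloor(\log M)^3\rfloor$ (see \textbf{(C.5)}). One must confirm that $k\le M(r-1)$ never escapes this window throughout the induction, and that the lower-bound factor stays positive at each step via \textbf{(C.4)}; once both are checked, the estimate follows immediately.
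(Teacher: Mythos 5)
Your proposal is correct and follows essentially the same route as the paper: the base case $\Prob_{\sfN}(1)=1-\pi(M)/M$ handled by Lemma \ref{le:pi-interval} with $k=0$, the range check $M(r-1)\le M(\log M)^3$ for $r\le R_1$, and iteration of the recursions \eqref{eq:N-N-lower} and \eqref{eq:N-N-upper} with the uniform complementary bounds $1-\frac{1}{\log M\mp 4\log\log M}$. Your explicit remarks on the swap of bounds under complementation and on positivity via \textbf{(C.4)} are details the paper leaves implicit, but the argument is identical in substance.
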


\begin{proof}
	When $r=1$, we have
	\begin{align*}
		\Prob_{\sfN}(1) = \frac{M-\pi(M)}{M}.
	\end{align*}
	Invoking Lemma \ref{le:pi-interval} with $k=0$ gives the inequalities. Now assuming that $2\le r\le R_1$, we have $M(r-1)\le M(R_1-1)\le M(\log M)^3$. In light of Lemma \ref{le:pi-interval},
	\begin{align*}
		\frac{\underset{r-1\le k\le M(r-1)}{\min}\#\{j\in \{1,\ldots,M\}:k+j\not\in \mathbb{P}\}}{M}&\ge 1-\frac{1}{\log M-4\log\log M},\\
		\frac{\underset{r-1\le k\le M(r-1)}{\max}\#\{j\in \{1,\ldots,M\}:k+j\not\in \mathbb{P}\}}{M}&\le 1-\frac{1}{\log M+4\log\log M}.
	\end{align*}
	Repeatedly applying \eqref{eq:N-N-lower} and \eqref{eq:N-N-upper} gives the required claims.
\end{proof}

We also have bounds for $\Prob_{\sfP}(r)$.

\begin{lemma}\label{le:Pr-P-bounds}
	Suppose $M\ge M_0$. Then for $r\le R_1$,
	\begin{align}
		\Prob_{\sfP}(r)&\ge \frac{1}{\log M+4\log\log M}\left(1-\frac{1}{\log M-4\log\log M}\right)^{r-1},\label{eq:Pr-P-bounds-l}\\
		\Prob_{\sfP}(r)&\le \frac{1}{\log M-4\log\log M}\left(1-\frac{1}{\log M+4\log\log M}\right)^{r-1}.\label{eq:Pr-P-bounds-u}
	\end{align}
\end{lemma}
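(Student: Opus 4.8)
The plan is to bootstrap from the already-established bounds on $\Prob_{\sfN}$ in Lemma~\ref{le:Pr-N-bounds}, using the one-step recursive inequalities \eqref{eq:P-N-lower} and \eqref{eq:P-N-upper} that tie $\Prob_{\sfP}(r)$ to $\Prob_{\sfN}(r-1)$. First I would dispose of the base case $r=1$ separately. Here the game stops at the first round exactly when $\eta_1$ is one of the $\pi(M)$ primes in $\{1,\dots,M\}$, so $\Prob_{\sfP}(1)=\pi(M)/M$. Applying Lemma~\ref{le:pi-interval} with $k=0$ gives $\frac{M}{\log M+4\log\log M}\le \pi(M)\le \frac{M}{\log M-4\log\log M}$, and dividing by $M$ yields precisely the two claimed bounds, the trailing factor $(1-\cdots)^{r-1}$ being empty when $r=1$.

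For $r\ge 2$, the key observation is that the quantity $\#\{j\in\{1,\dots,M\}:k+j\in\mathbb{P}\}$ appearing in \eqref{eq:P-N-lower} and \eqref{eq:P-N-upper} is exactly $\pi(k+M)-\pi(k)$. To invoke Lemma~\ref{le:pi-interval} I must confirm that every $k$ in the relevant range $r-1\le k\le M(r-1)$ satisfies the hypothesis $0\le k\le M(\log M)^3$; this holds because $M(r-1)\le M(R_1-1)\le M(\log M)^3$ by the definition \eqref{eq:para-1} of $R_1$. Consequently Lemma~\ref{le:pi-interval} bounds the minimum and the maximum over $k$ of $\pi(k+M)-\pi(k)$ by $\frac{M}{\log M+4\log\log M}$ and $\frac{M}{\log M-4\log\log M}$ respectively, so that \eqref{eq:P-N-lower} and \eqref{eq:P-N-upper} sharpen to
\[
\frac{\Prob_{\sfN}(r-1)}{\log M+4\log\log M}\le \Prob_{\sfP}(r)\le \frac{\Prob_{\sfN}(r-1)}{\log M-4\log\log M}.
\]

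Finally I would feed in the bounds on $\Prob_{\sfN}(r-1)$ from Lemma~\ref{le:Pr-N-bounds}, which apply since $r-1<r\le R_1$: the lower bound $\Prob_{\sfN}(r-1)\ge\big(1-\frac{1}{\log M-4\log\log M}\big)^{r-1}$ combines with the left inequality above to give \eqref{eq:Pr-P-bounds-l}, and the upper bound $\Prob_{\sfN}(r-1)\le\big(1-\frac{1}{\log M+4\log\log M}\big)^{r-1}$ combines with the right inequality to give \eqref{eq:Pr-P-bounds-u}. I do not expect any genuine obstacle beyond carefully matching the $\pm 4\log\log M$ in each factor and checking the range condition on $k$ — the substantive analytic work has already been absorbed into Lemma~\ref{le:pi-interval} and Lemma~\ref{le:Pr-N-bounds}, so what remains is to chain the one-step recursion exactly once.
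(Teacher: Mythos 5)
Your proposal is correct and follows essentially the same route as the paper: the base case $r=1$ via $\Prob_{\sfP}(1)=\pi(M)/M$ together with Lemma~\ref{le:pi-interval} at $k=0$, and for $2\le r\le R_1$ the identification of $\#\{j\in\{1,\dots,M\}:k+j\in\mathbb{P}\}$ with $\pi(k+M)-\pi(k)$, the range check $M(r-1)\le M(R_1-1)\le M(\log M)^3$, and a single application of \eqref{eq:P-N-lower}--\eqref{eq:P-N-upper} combined with Lemma~\ref{le:Pr-N-bounds}. No gaps; the matching of the $\pm 4\log\log M$ factors is exactly as in the paper's argument.
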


\begin{proof}
	For $r=1$, we deduce the two inequalities by the fact that
	\begin{align*}
		\Prob_{\sfP}(1) = \frac{\pi(M)}{M}.
	\end{align*}
	For $2\le r\le R_1$, it still follows from Lemma \ref{le:pi-interval} that
	\begin{align*}
		\frac{\underset{r-1\le k\le M(r-1)}{\min}\#\{j\in \{1,\ldots,M\}:k+j\in \mathbb{P}\}}{M}&\ge \frac{1}{\log M+4\log\log M},\\
		\frac{\underset{r-1\le k\le M(r-1)}{\max}\#\{j\in \{1,\ldots,M\}:k+j\in \mathbb{P}\}}{M}&\le \frac{1}{\log M-4\log\log M}.
	\end{align*}
	To conclude our results, we simply combine Lemma \ref{le:Pr-N-bounds} with \eqref{eq:P-N-lower} and \eqref{eq:P-N-upper}.
\end{proof}

Our last piece of ingredient is an analog of Proposition 1 in \cite{AM2023}.

\begin{lemma}
	For every $r$ and for every non-prime $k$ with $r\le k\le Mr$,
	\begin{align}\label{eq:Pr(r,k)}
		\Prob(r,k)\le \left(1-\frac{1}{M}\right)^{\pi(k)}.
	\end{align}
\end{lemma}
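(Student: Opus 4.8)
The plan is to prove a slightly stronger, $r$-free statement and then deduce the lemma. Since each roll contributes at least $1$, the count $\#\{\eta\in\cH:r(\eta)=r,\ |\eta|=k\}$ vanishes unless $1\le r\le k$, so for a fixed non-prime $k$ the quantity $G(k):=\sum_{r\ge 1}\Prob(r,k)$ is a finite sum and $\Prob(r,k)\le G(k)$. Thus it suffices to show $G(k)\le(1-1/M)^{\pi(k)}$ for every non-prime $k$. Probabilistically $G(k)$ is the probability that the strictly increasing walk $0=S_0<S_1<\cdots$ with uniform increments in $\{1,\dots,M\}$ ever visits $k$ without any partial sum being prime, but I will not need this interpretation beyond motivating the recursion.

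First I would set up a recursion by conditioning on the final roll. Extending the definition by $G(0):=1$, $G(k):=0$ for $k\in\mathbb{P}$, and $G(k):=0$ for $k<0$, one checks directly from the definition of $\Prob(r,k)$ (decomposing a prime-free walk reaching a non-prime $k$ according to its penultimate position $k-j$, which must itself be non-prime or $0$) that for every non-prime $k\ge 1$,
\[
G(k)=\frac1M\sum_{j=1}^{M}G(k-j).
\]
The whole lemma then follows by induction on $k$. The base case $k=0$ is immediate since $\pi(0)=0$, and the case $k\in\mathbb{P}$ is trivial as $G(k)=0$.

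For the inductive step, fix a non-prime $k\ge 1$ and assume $G(m)\le(1-1/M)^{\pi(m)}$ for all $m<k$. Primes and negative indices contribute $0$ to the recursion, so applying the inductive hypothesis to the surviving (non-negative, non-prime) terms gives
\[
G(k)\le\frac1M\sum_{\substack{\max(0,k-M)\le m\le k-1\\ m\notin\mathbb{P}}}\Bigl(1-\tfrac1M\Bigr)^{\pi(m)}.
\]
Since $k$ is non-prime we have $\pi(k)=\pi(k-1)$, so writing $a(m):=\pi(k)-\pi(m)\ge 0$ (the number of primes in the open interval $(m,k)$) and factoring out $(1-1/M)^{\pi(k)}$ reduces everything to the single inequality
\[
\sum_{\substack{\max(0,k-M)\le m\le k-1\\ m\notin\mathbb{P}}}\Bigl(1-\tfrac1M\Bigr)^{-a(m)}\ \le\ M .
\]

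This last estimate carries the real content, and it is handled by two elementary counting observations together with Bernoulli's inequality. Let $t$ denote the number of primes in $(\max(0,k-M),k)$. Since every surviving index satisfies $(m,k)\subseteq(\max(0,k-M),k)$, we have $a(m)\le t$, and as $(1-1/M)^{-1}\ge 1$ each summand is at most $(1-1/M)^{-t}$. On the other hand the window $\{\max(0,k-M),\dots,k-1\}$ has at most $M$ elements and contains at least $t$ primes, so it has at most $M-t$ non-prime elements. Combining, the left-hand side is at most $(M-t)(1-1/M)^{-t}$, and the desired bound $(M-t)(1-1/M)^{-t}\le M$ is exactly Bernoulli's inequality $(1-1/M)^{t}\ge 1-t/M$. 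This closes the induction, and since $\Prob(r,k)\le G(k)$ the stated bound follows. The main obstacle is recognizing that the apparently delicate extremal configuration (all primes pushed to the top of the window, forcing many terms to large exponents $a(m)$) is controlled uniformly by bounding each term by its maximum $(1-1/M)^{-t}$ and counting the at most $M-t$ surviving terms, after which Bernoulli's inequality finishes the argument cleanly.
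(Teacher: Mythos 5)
Your proof is correct, and at its core it is the paper's argument: the same decomposition by the penultimate (necessarily non-prime or zero) position, the same counting of $t$ primes in the length-$\le M$ window leaving at most $M-t$ non-prime terms each with exponent deficit $a(m)\le t$, and the same appeal to Bernoulli's inequality $\bigl(1-\tfrac1M\bigr)^t\ge 1-\tfrac{t}{M}$ --- this is precisely the paper's step with its $m$ playing the role of your $t$. The genuine difference is organizational: the paper inducts on $r$ with $\Prob(r,k)$ fixed, whereas you induct on $k$ for the aggregated quantity $G(k)=\sum_{r\ge 1}\Prob(r,k)$, which buys you two things: a strictly stronger conclusion (the bound holds for the sum over $r$, not just each term), and a trivial base case $G(0)=1$ in place of the paper's $r=1$ case, which requires the separate observation $\pi(k)+1\le\pi(M)+1\le M$. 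One presentational nit: setting $G(k):=0$ for prime $k$ is a convention imposed by fiat rather than an extension of $\sum_r\Prob(r,k)$ (which is positive at primes); as you use it the convention is consistent, since the recursion is asserted only at non-prime $k\ge 1$ and prime penultimate positions are correctly excluded there, but it would be worth saying so explicitly.
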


\begin{proof}
	We prove this inequality by induction on $r$. When $r=1$, we observe that for every non-prime $k$ with $1\le k\le M$,
	\begin{align*}
		\Prob(r,k) = \frac{1}{M}.
	\end{align*}
	On the other hand,
	\begin{align*}
		\left(1-\frac{1}{M}\right)^{\pi(k)}\ge 1-\frac{\pi(k)}{M}.
	\end{align*}
	It is clear that for every $1\le k\le M$,
	\begin{align*}
		1-\frac{\pi(k)}{M}\ge \frac{1}{M},
	\end{align*}
	since $\pi(k)+1\le \pi(M)+1\le M$, the latter of which follows as we always have the non-prime $1$.
	
	Now let us assume that our claim holds for a certain $r-1$ (and every corresponding $k$), and we prove it for $r$. Note that by our inductive assumption,
	\begin{align*}
		\Prob(r,k) &= \sum_{\substack{k-M\le k'\le k-1\\r-1\le k'\le M(r-1)\\k'\not\in\mathbb{P}}} \frac{\Prob(r-1,k')}{M}\\
		&\le \sum_{\substack{k-M\le k'\le k-1\\r-1\le k'\le M(r-1)\\k'\not\in\mathbb{P}}}\frac{1}{M} \left(1-\frac{1}{M}\right)^{\pi(k')}\\
		&\le \sum_{\substack{k-M\le k'\le k-1\\k'\not\in\mathbb{P}}}\frac{1}{M} \left(1-\frac{1}{M}\right)^{\pi(k')}.
	\end{align*}
	Suppose that there are $m$ primes in $\{k-M,\ldots,k-1\}$, namely, $\pi(k-1)-\pi(k-M-1)=m$. We also observe that $k$ is a non-prime so that $\pi(k)=\pi(k-1)$. Hence, for every non-prime $k'$ with $k-M\le k'\le k-1$, we have $\pi(k')\ge \pi(k-M-1) = \pi(k-1)-m=\pi(k)-m$. It turns out that
	\begin{align*}
		\Prob(r,k) &\le \sum_{\substack{k-M\le k'\le k-1\\k'\not\in\mathbb{P}}} \frac{1}{M}\left(1-\frac{1}{M}\right)^{\pi(k)-m}\\
		&=(M-m)\cdot \frac{1}{M}\left(1-\frac{1}{M}\right)^{\pi(k)-m}.
	\end{align*}
	Finally, invoking the fact that
	\begin{align*}
		\frac{M-m}{M}\le \left(1-\frac{1}{M}\right)^m,
	\end{align*}
	the required result follows.
\end{proof}

\begin{definition*}
	In the next two subsections, we shall use the following notation to facilitate expression:
	\begin{align*}
		L=L(M)&:=\log M-4\log\log M,\\
		U=U(M)&:=\log M+4\log\log M.
	\end{align*}
\end{definition*}

\subsection{Less than $(\log M)^3$ rolls? There we are!}

In this part, we shall establish \eqref{eq:Exp-R1}, namely, the main contribution to the expected stopping round. The basic idea is to bound the left-hand side of \eqref{eq:Exp-R1} from both above and below:
\begin{align}
	\sum_{r= 1}^{R_1} r \Prob_{\sfP}(r) &\le \log M + O(\log\log M),\label{eq:Exp-R1-u}\\
	\sum_{r= 1}^{R_1} r \Prob_{\sfP}(r) &\ge \log M + O(\log\log M).\label{eq:Exp-R1-l}
\end{align}

\begin{proof}[Proof of \eqref{eq:Exp-R1-u}]
	With a direct application of \eqref{eq:Pr-P-bounds-u}, we have
	\begin{align*}
		\sum_{r= 1}^{R_1} r \Prob_{\sfP}(r) &\le \frac{1}{L} \sum_{r= 1}^{R_1} r \left(1-\frac{1}{U}\right)^{r-1}\\
		&\le \frac{1}{L} \sum_{r\ge 1} r \left(1-\frac{1}{U}\right)^{r-1}\\
		&=\frac{(\log M+4\log\log M)^2}{\log M-4\log\log M}\\
		&=\log M + O(\log\log M),
	\end{align*}
	as requested.
\end{proof}

\begin{proof}[Proof of \eqref{eq:Exp-R1-l}]
	We make use of \eqref{eq:Pr-P-bounds-l} and derive that
	\begin{align*}
		\sum_{r= 1}^{R_1} r \Prob_{\sfP}(r) &\ge \frac{1}{U} \sum_{r= 1}^{R_1} r \left(1-\frac{1}{L}\right)^{r-1}\\
		&= \frac{1}{U} \sum_{r\ge 1} r \left(1-\frac{1}{L}\right)^{r-1} - \frac{1}{U} \sum_{r\ge R_1+1} r \left(1-\frac{1}{L}\right)^{r-1}.
	\end{align*}
	For the former term, it is straightforward that
	\begin{align*}
		\frac{1}{U} \sum_{r\ge 1} r \left(1-\frac{1}{L}\right)^{r-1} &=\frac{(\log M-4\log\log M)^2}{\log M+4\log\log M}\\
		&=\log M + O(\log\log M). 
	\end{align*}
	For the latter term, we have
	\begin{align*}
		\frac{1}{U} \sum_{r\ge R_1+1} r \left(1-\frac{1}{L}\right)^{r-1} = \frac{L(L+R_1)}{U}\left(1-\frac{1}{L}\right)^{R_1}\xrightarrow{M\to +\infty} 0,
	\end{align*}
	so it only gives a marginal contribution of asymptotic order $o(1)$.
\end{proof}

\subsection{More than $(\log M)^3$ rolls? Bazinga!}

Now we prove the very marginal error term for $\bE(\tau^{(M)})$ as claimed in \eqref{eq:Exp-R2}. Our strategy is a two-step process. That is, we aim to establish two separate estimates, where $R_2$ is as in \eqref{eq:para-1}:
\begin{align}
	\sum_{r= R_1+1}^{R_2} r \Prob_{\sfP}(r) &= o(1),\label{eq:Exp-R21}\\
	\sum_{r\ge R_2+1} r \Prob_{\sfP}(r) &= o(1).\label{eq:Exp-R22}
\end{align}

\begin{proof}[Proof of \eqref{eq:Exp-R21}]
	Recalling \eqref{eq:P-r-(r-1)}, \eqref{eq:N-r-(r-1)} and \eqref{eq:Pr-N-bounds-u}, we find that for $R_1+1\le r\le R_2$,
	\begin{align*}
		\Prob_{\sfP}(r)\le \Prob_{\sfN}(R_1)\le \left(1-\frac{1}{U}\right)^{R_1}.
	\end{align*}
	Therefore,
	\begin{align*}
		\sum_{r= R_1+1}^{R_2} r \Prob_{\sfP}(r)&\le \left(1-\frac{1}{U}\right)^{R_1}\sum_{r= R_1+1}^{R_2} r\\
		&\le \left(1-\frac{1}{U}\right)^{R_1}\cdot R_2^2\\
		&\le \left(1-\frac{1}{\log M+4\log\log M}\right)^{(\log M)^3 - 1}\cdot M^4\\
		&\to 0,
	\end{align*}
	as $M\to +\infty$.
\end{proof}

\begin{proof}[Proof of \eqref{eq:Exp-R22}]
	We start by using \eqref{eq:P-r-(r-1)} to bound
	\begin{align*}
		\sum_{r\ge R_2+1} r \Prob_{\sfP}(r)&\le \sum_{r\ge R_2+1} r \Prob_{\sfN}(r-1)\\
		&= \sum_{r\ge R_2} (r+1) \Prob_{\sfN}(r).
	\end{align*}
	Invoking \eqref{eq:N-(r,k)} then gives
	\begin{align*}
		\sum_{r\ge R_2+1} r \Prob_{\sfP}(r)&\le \sum_{r\ge R_2} (r+1) \sum_{\substack{r\le k\le Mr\\ k\not\in \mathbb{P}}} \Prob(r,k)\\
		&=\sum_{\substack{k\ge R_2\\k\not\in\mathbb{P}}}\sum_{r=\max\{R_2,\lceil\frac{k}{M}\rceil\}}^k (r+1)\Prob(r,k).
	\end{align*}
	It further follows from \eqref{eq:Pr(r,k)} that
	\begin{align*}
		\sum_{r\ge R_2+1} r \Prob_{\sfP}(r)&\le \sum_{\substack{k\ge R_2\\k\not\in\mathbb{P}}}\left(1-\frac{1}{M}\right)^{\pi(k)}\sum_{r=\max\{R_2,\lceil\frac{k}{M}\rceil\}}^k (r+1)\\
		&\le \sum_{k\ge R_2} (k+1)^2 \left(1-\frac{1}{M}\right)^{\pi(k)}.
	\end{align*}
	Now for each integer $\ell\ge \pi(R_2)$, we define
	\begin{align*}
		S_\ell := \{k\in\mathbb{N}: \pi(k)=\ell\}.
	\end{align*}
	Clearly, $(\min S_\ell)> \ell$ since it is always true that $\pi(\ell)< \ell$. We also claim that $(\max S_\ell)< \ell^2$.
	To see this, we recall our assumption that $M\ge M_0$. It is then clear that $\ell^2\ge \pi(R_2)^2 \ge \frac{M_0}{(\log M_0)^3}$, so that we may use \eqref{eq:pi-lower} for $x=\ell^2$ and obtain
	\begin{align*}
		\pi(\ell^2)\ge \frac{\ell^2}{2\log \ell}> \ell+1.
	\end{align*}
	The latter inequality is true for all $\ell\ge 3$, so it can be coordinated by a suitable choice of $M_0$. The above arguments indicate that $S_\ell$ is contained in the interval $[\ell+1,\ell^2-1]$. Thus,
	\begin{align*}
		\sum_{r\ge R_2+1} r \Prob_{\sfP}(r)&\le \sum_{\ell\ge \pi(R_2)}\left(1-\frac{1}{M}\right)^{\ell}\sum_{k\in S_\ell} (k+1)^2\\
		&\le \sum_{\ell\ge \pi(R_2)}\left(1-\frac{1}{M}\right)^{\ell}\sum_{k=\ell+1}^{\ell^2-1} (k+1)^2\\
		&\le \sum_{\ell\ge \pi(R_2)}\ell^8 \left(1-\frac{1}{M}\right)^{\ell}.
	\end{align*}
	Finally, we recall that $\pi(R_2)=\pi(M^2)\ge \frac{M^2}{2\log M}$ by invoking \eqref{eq:pi-lower}. Now a trivial computation gives
	\begin{align*}
		\sum_{\ell\ge \frac{M^2}{2\log M}}\ell^8 \left(1-\frac{1}{M}\right)^{\ell}\xrightarrow{M\to +\infty} 0,
	\end{align*}
	thereby confirming the claimed estimate.
\end{proof}

\section{Conclusion}

There are a handful of problems that merit further investigation. First, from the limited data in Table \ref{ta:value}, it seems that $\bE(\tau^{(M)})$ is always larger than $\log M$. If this hunch is true, then a highly feasible approach is to study the asymptotic behavior of $\bE(\tau^{(M)}) - \log M$, unless it behaves fluctuantly.

\begin{problem}
	Is it true that $\bE(\tau^{(M)}) > \log M$ for all $M\ge 2$? If not, does the sign of $\bE(\tau^{(M)}) - \log M$ change infinitely many times?
\end{problem}

\begin{problem}
	Is it possible to elaborate on the error term in \eqref{eq:main}?
\end{problem}

In addition, Martinez and Zeilberger \cite{MZ2023} considered the scenarios where other kinds of numbers are hit. This problem is not hard when these numbers distribute evenly and have a positive density $\delta>0$ in the set of positive integers. (\textit{Answer}: $\approx \frac{1}{\delta}$ rolls of fair dice with a sufficiently large number of faces.) But unfortunately, for the \emph{hitting-a-square} case, our method cannot be transplanted, and thus a corresponding asymptotic relation is out of reach. This is mainly because primes are much \emph{denser} than squares among positive integers, even though both have density zero.

\begin{problem}
	If we replace primes with any kind of positive integers as our hitting targets, do we still have a corresponding asymptotic relation?
\end{problem}

\subsection*{Acknowledgements}

The author was supported by a Killam Postdoctoral Fellowship from the Killam Trusts.

\bibliographystyle{amsplain}

\begin{thebibliography}{9}
	
	\bibitem{AM2023}
	N. Alon and Y. Malinovsky, Hitting a prime in 2.43 dice rolls (on average), \textit{Amer. Statist.}, to appear.
	
	\bibitem{BE1971}
	M. Born and A. Einstein, \textit{The Born--Einstein Letters}, The Macmillan Press Ltd., London-Basingstoke, 1971.
	
	\bibitem{Das2017}
	A. DasGupta, Solution to Puzzle 17, \textit{IMS Bulletin} \textbf{46} (2017), 9.
	
	\bibitem{dlV1899}
	C.-J. de la Vall\'ee Poussin, Sur la fonction $\zeta(s)$ de Riemann et le nombre des nombres premiers inf\'erieurs \`a une limite donn\'ee, \textit{M\'em. Couronn\'es Acad. Roy. Belgique} \textbf{59} (1899), 1--74.
	
	\bibitem{For2004}
	K. Ford, Vinogradov's integral and bounds for the Riemann zeta function, \textit{Proc. London Math. Soc. (3)} \textbf{85} (2002), no. 3, 565--633.
	
	\bibitem{Ing1990}
	A. E. Ingham, \textit{The Distribution of Prime Numbers}, Cambridge University Press, Cambridge, 1990.
	
	\bibitem{MZ2023}
	L. Martinez and D. Zeilberger, How many dice rolls would it take to reach your favorite kind of number, \textit{Maple Trans.}, to appear.
	
	\bibitem{RS1962}
	J. B. Rosser and L. Schoenfeld, Approximate formulas for some functions of prime numbers, \textit{Illinois J. Math.} \textbf{6} (1962), 64--94.

\end{thebibliography}

\end{document}